\documentclass[a4paper]{article}
\usepackage{amsmath}
\usepackage{amsthm}
\newtheorem{theorem}{Theorem}
\newtheorem{lemma}{Lemma}

\title{Counting Cocircuits and Convex Two-Colourings is $\#$P-complete}
\author{Andrew J. Goodall\thanks{Supported by the Heilbronn Institute for
Mathematical Research, Bristol, U.K.}\\
Department of Mathematics\\University Walk\\University of Bristol\\Bristol, BS8 1TW\\United Kingdom
\\[5mm] Steven D. Noble%
\thanks{Partially supported by the Heilbronn Institute for
Mathematical Research, Bristol, U.K.}\\Department of Mathematical Sciences\\Brunel University\\
Kingston Lane\\Uxbridge, UB8 3PH\\United Kingdom}%

\begin{document}
\maketitle
\begin{abstract}
We prove that the problem of counting the number of colourings of the vertices of a graph with at most two colours, such that the colour classes induce connected subgraphs is $\#$P-complete. We also show that the closely related problem of counting the number of cocircuits of a graph is $\#$P-complete. 
\end{abstract}

\section{Introduction}
A convex colouring of a graph $G$ is an assignment of colours to
its vertices so that for each colour $c$ the subgraph of $G$
induced by the vertices receiving colour $c$ is connected.
We consider a graph with no vertices to be connected.
The purpose
of this paper is to resolve a question of
Makowsky~\cite{mak:Question} by showing that counting the number
of convex colourings using at most two colours is $\#$P-complete.
More precisely we show that the following problem is
$\#$P-complete.
\begin{trivlist}\item[]
$\#$\textsc{Convex Two-Colourings}\\
\textbf{Input:} Graph $G$.\\
\textbf{Output:} The number of $f:V(G)\rightarrow \{0,1\}$ such
that both $G:f^{-1}(0)$ and $G:f^{-1}(1)$ are connected.
\end{trivlist}
For the definition of the complexity class $\#$P,
see~\cite{garey:comp+intract} or~\cite{papadimitriou}.

Note that the number of convex colourings using at most two
colours is equal to zero if $G$ has three or more connected
components and equal to two if $G$ has exactly two connected
components. So we may restrict our attention to connected graphs.

\section{Reductions}
All our graphs will be simple. We begin with a few definitions.
Let $X$ and $Y$ be disjoint sets of vertices of a graph $G$. The set
of edges of $G$ that have one endpoint in $X$
and the other in $Y$ is denoted by $\delta(X,Y)$. Given a connected graph $G$, a cut is a
partition of $V(G)$ into two (non-empty) sets called its
\emph{shores}. The \emph{crossing set} of a cut with shores $X$
and $Y$ is $\delta(X,Y)$. A cut is a \emph{cocircuit} if no proper
subset of its crossing set is the crossing set of a cut. Let $G$
be a connected graph. Then a cut of $G$ with crossing set $A$ is a
cocircuit if and only if the graph $G\setminus A$ obtained by removing
the edges in $A$ from $G$ has exactly two connected
components.

Note that our terminology is slightly at odds with standard usage
in the sense that the terms cut and cocircuit usually refer to
what we call the crossing set of respectively a cut and a
cocircuit. Our usage prevents some cumbersome descriptions in the
proofs. We will however abuse our notation by saying that a cut or
cocircuit has size $k$ if its crossing set has size $k$.

We consider the complexity of the following problems.
\begin{trivlist}\item[]
$\#$\textsc{Cocircuits}\\
\textbf{Input:} Simple connected graph $G$.\\
\textbf{Output:} The number of cocircuits of $G$.
\end{trivlist}

\begin{trivlist}\item[]
$\#$\textsc{Required Size Cocircuits}\\
\textbf{Input:} Simple connected graph $G$, strictly positive integer $k$.\\
\textbf{Output:} The number of cocircuits of $G$ of size $k$.
\end{trivlist}

\begin{trivlist}\item[]
$\#$\textsc{Max Cut}\\
\textbf{Input:} Simple connected graph $G$, strictly positive integer $k$.\\
\textbf{Output:} The number of cuts of $G$ of size $k$.
\end{trivlist}

\begin{trivlist}\item[]
$\#$\textsc{Monotone 2-SAT}\\
\textbf{Input:} A Boolean formula in conjuctive normal form in
which each clause contains two variables and there are no negated
literals.\\
\textbf{Output:} The number of satisfying assignments.
\end{trivlist}
It is easy to see that each of these problems is a member of
$\#$P.

The following result is from Valiant's seminal paper on
$\#$P~\cite{val:enumeration}.
\begin{theorem}~\label{th:2SAT}
$\#$\textsc{Monotone 2-SAT} is $\#P$-complete.
\end{theorem}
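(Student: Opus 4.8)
The plan is to establish the two directions of $\#\mathrm{P}$-completeness separately. Membership in $\#\mathrm{P}$ is routine and I would dispose of it first: a satisfying assignment is a certificate of polynomial size whose validity (that every clause contains a true literal) can be checked in polynomial time, so a nondeterministic machine counting its accepting paths computes exactly the number of satisfying assignments. The substance of the theorem is therefore the $\#\mathrm{P}$-hardness.

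My main observation is that $\#$\textsc{Monotone 2-SAT} is a graph-counting problem in disguise. Given a monotone $2$-CNF formula on variable set $V$, form the simple graph $G=(V,E)$ whose edges are its clauses, so that the clause $(x_u\vee x_v)$ becomes the edge $uv$. An assignment $f\colon V\to\{0,1\}$ satisfies the formula exactly when every clause contains a true variable, that is, when every edge of $G$ has an endpoint in $f^{-1}(1)$; equivalently, when $f^{-1}(0)$ is an independent set of $G$. This is a bijection between satisfying assignments and independent sets of $G$ (equivalently, vertex covers), and since every simple graph arises from some formula in this way, the correspondence is a parsimonious, linear-time reduction in both directions. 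Hence $\#$\textsc{Monotone 2-SAT} is computationally equivalent to counting the independent sets of an arbitrary simple graph.

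It remains to prove that counting independent sets is $\#\mathrm{P}$-hard, and this is the step I expect to be the main obstacle, since the decision problem here is trivial (the empty set is always independent) and all the difficulty is concentrated in the enumeration. I would derive it from Valiant's flagship result that the permanent of a $0$--$1$ matrix, equivalently the number of perfect matchings of a bipartite graph, is $\#\mathrm{P}$-complete. A convenient bridge is the identity that the matchings of a graph $H$ are precisely the independent sets of its line graph $L(H)$, so that hardness of counting matchings transfers immediately to counting independent sets; the hardness of counting matchings is in turn a consequence of the permanent result. The genuine work, then, lies entirely in this reduction from the permanent down to a pure (unweighted) counting problem, typically accomplished by gadget constructions together with a polynomial-interpolation argument to remove the weights that such gadgets introduce; once that is in hand, the passage to $\#$\textsc{Monotone 2-SAT} is the immediate reformulation described above.
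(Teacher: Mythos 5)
The paper does not actually prove this theorem: it is stated as Theorem~1 with a citation to Valiant's enumeration paper, so there is no in-paper argument to match yours against. That said, the reductions you do spell out are correct and are essentially the standard (indeed Valiant's own) way of seeing the result: satisfying assignments of a monotone 2-CNF are in parsimonious bijection with the vertex covers, equivalently the independent sets, of the graph whose edges are the clauses; and the matchings of a graph $H$ are precisely the independent sets of its line graph $L(H)$, so hardness of counting all matchings transfers to $\#$\textsc{Monotone 2-SAT}. Those two steps are clean and need no repair.

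The gap is exactly the one you flag yourself: the $\#$P-hardness of counting all matchings is the entire mathematical content of the theorem, and ``gadget constructions together with a polynomial-interpolation argument'' is a description of a proof, not a proof. To close it you must either cite Valiant for $\#$\textsc{Matchings} (at which point you may as well cite him for $\#$\textsc{Monotone 2-SAT} directly, as the paper does) or carry out the interpolation. One concrete way: let $N_k(H)$ be the number of matchings of $H$ leaving exactly $k$ vertices uncovered, and let $H_s$ be $H$ with $s$ new pendant neighbours attached to each vertex. A matching of $H_s$ is a matching $M$ of $H$ together with, for each $M$-uncovered vertex, a choice of one of its $s$ pendant edges or nothing, so the total number of matchings of $H_s$ is $\sum_k N_k(H)(s+1)^k$. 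Evaluating at $s=0,1,\dots,|V(H)|$ and inverting the resulting Vandermonde system recovers $N_0(H)$, the number of perfect matchings, whose $\#$P-hardness for bipartite $H$ is the $0$--$1$ permanent theorem that you are entitled to take as the base. With that paragraph added, your chain Permanent $\propto$ $\#$\textsc{Matchings} $\propto$ $\#$\textsc{Independent Sets} $\propto$ $\#$\textsc{Monotone 2-SAT} is complete; note that this interpolation has the same flavour as the $l$-stretch Vandermonde argument the paper itself uses to reduce $\#$\textsc{Required Size Cocircuits} to $\#$\textsc{Cocircuits}.
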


We will establish the following reductions.
\begin{trivlist}\item[]
\begin{center}
$\#$\textsc{Monotone 2-SAT }$\propto\#$\textsc{Max
Cut }$\propto\#$\textsc{Required Size Cocircuits}\\
$\propto\#$\textsc{Cocircuits }$\propto\#$\textsc{Convex
Two-Colourings}. \end{center}
\end{trivlist}
Combining Theorem~\ref{th:2SAT} with these reductions shows that
each of the five problems that we have discussed is
$\#$P-complete. As far as we are aware, each of these reductions
is new. We have not been able to find a reference showing that
$\#$\textsc{Max Cut} is $\#P$ complete. Perhaps it is correct to
describe this result as `folklore'. In any case our first reduction will establish this result.
Some similar problems, but not exactly what we consider
here, are shown to be $\#P$ complete in~\cite{provan:cuts}.

\begin{lemma}
$\#$\textsc{Monotone 2-SAT} $\propto \#$\textsc{Max Cut}.
\end{lemma}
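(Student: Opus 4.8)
The plan is to reduce counting satisfying assignments of a monotone 2-SAT formula to counting cuts of a prescribed size. Given a monotone 2-SAT formula $\phi$ with variables $x_1,\dots,x_n$ and clauses $C_1,\dots,C_m$, I would build a graph $G$ whose vertex set contains one vertex $v_i$ for each variable $x_i$, together with some auxiliary gadgetry, and try to arrange a bijection between truth assignments and cuts so that the cut size encodes exactly how many clauses are satisfied. The natural first attempt is to identify a truth assignment $f\colon\{x_1,\dots,x_n\}\to\{0,1\}$ with the cut having shores $\{v_i: f(x_i)=0\}$ and $\{v_i: f(x_i)=1\}$, and to place an edge for each clause so that the clause $x_i\vee x_j$ contributes to the crossing set precisely when at least one of $x_i,x_j$ is true.

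The obstacle with the naive encoding is that a clause $x_i\vee x_j$ is satisfied in three of the four assignments to its variables, whereas a single edge between $v_i$ and $v_j$ crosses the cut in only the two assignments where they differ; so a plain edge counts ``$x_i\neq x_j$'' rather than ``$x_i\vee x_j$.'' To fix this I would introduce for each clause a small gadget. A clean device is to add two global vertices $s$ and $t$ which are forced onto opposite shores (by making one large colour class connected, or by adding enough parallel structure that in every size-$k$ cut of interest $s$ and $t$ are separated), playing the roles of the constant $\mathtt{false}$ and $\mathtt{true}$. Then a literal $x_i$ is true exactly when $v_i$ lies on the $t$-shore, and a clause $x_i\vee x_j$ can be modelled by a clause-vertex $c$ adjacent to $v_i$, $v_j$, and $s$: one checks that the number of edges of this gadget crossing the cut is minimised (say equal to some constant) precisely when the clause is satisfied, and is strictly larger otherwise.

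The key technical step is then to tune the construction so that the map from assignments to cuts is a bijection onto the cuts of one particular size $k$, after which $\#\textsc{Max Cut}$ on input $(G,k)$ returns exactly the number of satisfying assignments of $\phi$. Concretely, I would choose the gadget so that every satisfying assignment yields a cut of size exactly $k$, every non-satisfying assignment yields a strictly larger cut, and no cut of size $k$ arises from configurations other than the intended variable/clause placements (in particular forcing the clause-vertices and the vertices $s,t$ onto the sides dictated by the assignment). Since each clause gadget and the $s$–$t$ structure contribute a bounded number of vertices and edges, the reduction is polynomial in $n+m$, and the answer is read off directly without any need for interpolation.

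I expect the main obstacle to be rigidity: ensuring that in a size-$k$ cut the auxiliary vertices are \emph{forced} into their intended shores so that spurious cuts of size $k$ cannot occur. Controlling this typically requires weighting the gadget edges by using parallel paths (since the graph must be simple, multi-edges are simulated by subdivided parallel paths or by attaching pendant stars) so that misplacing any auxiliary vertex incurs a cost exceeding the slack $k$. Verifying that these penalties are large enough, while keeping $k$ and the graph size polynomially bounded, is the delicate part of the argument; once the forcing is established, the counting bijection and the conclusion that $\#\textsc{Max Cut}$ is $\#$P-complete follow immediately from Theorem~\ref{th:2SAT}.
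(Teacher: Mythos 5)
Your overall strategy---variable vertices split by shore, a global ``false'' anchor, one gadget per clause, and counting cuts of one exact size $k$---is in the same spirit as the paper's, but as written there is a genuine gap: the gadget you propose does not do what you claim, and the one idea that makes the reduction go through is missing. Concretely, for a clause vertex $c$ adjacent to $v_i$, $v_j$ and $s$, the number of gadget edges in the crossing set is \emph{not} minimised exactly when $x_i\vee x_j$ holds: if both $v_i,v_j$ lie on the $s$-shore (clause falsified) the gadget contributes $0$ (with $c$ on the $s$-shore) or $3$ (with $c$ on the $t$-shore), whereas if the clause is satisfied it contributes $1$ or $2$. So a falsified clause can \emph{over}-contribute, and contributions of different gadgets compensate: one falsified gadget contributing $3$ together with one satisfied gadget contributing $1$ gives the same total as two satisfied gadgets contributing $2$ each. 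Since $\#$\textsc{Max Cut} here counts \emph{all} cuts of size exactly $k$, not maximum cuts, such spurious cuts are counted and the output is not a fixed multiple of the number of satisfying assignments. Your proposed repair---penalty edges heavy enough to force every auxiliary vertex into place, simulated by parallel subdivided paths---is precisely the delicate step, and nothing in the write-up carries it out; the subdivisions themselves change the cut sizes in ways that would have to be re-analysed.

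The missing idea is an \emph{unconditional per-gadget upper bound} that removes the need for any forcing. The paper turns each clause $x_u\vee x_v$ into an odd circuit of length $9$ through $x$, $x_u$ and $x_v$ (equally spaced along the circuit), the circuits for distinct clauses being pairwise edge-disjoint. Since every crossing set meets every circuit in an even number of edges, each clause circuit contributes at most $8$ to \emph{any} cut; and if $x$, $x_u$, $x_v$ all lie on one shore (clause falsified), each of the three length-$3$ arcs between them contributes at most $2$, capping that circuit's contribution at $6$. Hence a cut of size $k=8|\mathcal C|$ forces every circuit to contribute exactly $8$, which forces every clause to be satisfied, and the number of placements of the six internal circuit vertices achieving this is the same fixed constant for every satisfied clause, yielding the answer as a known multiple of the number of satisfying assignments with no weights, no forcing argument and no interpolation. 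To salvage your own gadget you would need an analogous argument showing that no cut of size $k$ can arise except with every gadget in its intended configuration; as it stands, that is exactly what fails.
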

\begin{proof}
Suppose we have an instance $I$ of $\#$\textsc{Monotone 2-SAT}
with variables $x_1,\ldots,x_n$ and clauses $\mathcal C=\{
C_1,\ldots,C_m\}$. We construct a corresponding instance $M(I)=(G,k)$ of
$\#$\textsc{Max Cut} by first defining a graph $G$ with vertex set
\[ \{x\} \cup \{x_1,\ldots,x_n\} \cup  \bigcup \{\{c_{i,1},\ldots,c_{i,6}\}:1\leq i \leq m\}\]
For each clause we add nine edges to $G$. Suppose $C_j$ is $x_u
\vee x_v$. Then we add the edges
\[ xc_{j,1}, c_{j,1}c_{j,2}, c_{j,2}x_u, x_uc_{j,3}, c_{j,3}c_{j,4}, c_{j,4}x_v, x_vc_{j,5}, c_{j,5}c_{j,6}, c_{j,6}x.\]
Distinct clauses correspond to pairwise edge-disjoint circuits, each
of size $9$. Now let $k =
8|\mathcal C|$. Clearly $M(I)$ may be constructed in polynomial time. We claim that the number of solutions of instance
$M(I)$ of \#{\sc Max Cut} is
equal to $2^{|\mathcal C|}$ times the number of
satisfying assignments of $I$.


Given a solution of $I$, let $L_1$ be the set of variables assigned
the value true and $L_0$ the set of variables assigned false
together with $x$. Observe that for each clause $C_j=x_u\vee x_v$
there are two choices of how to add the vertices
$c_{j,1},\ldots,c_{j,6}$ to either $L_0$ or $L_1$ so that exactly
eight edges of the circuit corresponding to $C_j$ have one
endpoint in $L_0$ and the other in $L_1$. Clearly the choices for
each clause are independent and distinct satisfying assignments
result in distinct choices of $L_0$ and $L_1$. Any of the choices
of $L_0$ and $L_1$ constructed in this way may be taken as the
shores of a cut of size $8|\mathcal C|$. Hence we have constructed
$2^{|\mathcal C|}$ solutions of $M(I)$ corresponding to each
satisfying assignment of $I$.


In any graph the intersection of a set of edges forming a circuit
and a crossing set of a cut must always have even size. So in a
solution of $M(I)$ each of the edge-disjoint circuits making up
$G$ and corresponding to clauses of $I$ must contribute exactly
eight edges to the cut. Suppose $U$ and $V\setminus U$ are the
shores of a cut of $G$ of size $8|\mathcal C|$. Then it can easily
be verified that for any clause $C=x_u \vee x_v$ both $U$ and
$V\setminus U$ must contain at least one element from
$\{x_u,x_v,x\}$. So it is straightforward to see that this
solution of $M(I)$ is one of those constructed above corresponding
to the satisfying assignment where a variable is false if and only
if the corresponding vertex is in the same set as $x$.
\end{proof}

\begin{lemma}
$\#$\textsc{Max Cut} $\propto \#$\textsc{Required Size
Cocircuits}.
\end{lemma}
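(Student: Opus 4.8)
The plan is to give a polynomial-time reduction turning an instance $(G,k)$ of $\#$\textsc{Max Cut} into a single instance of $\#$\textsc{Required Size Cocircuits}, whose answer, divided by a fixed power of two, is the number $\mathrm{mc}_k$ of cuts of $G$ of size $k$. Write $n=|V(G)|$ and $m=|E(G)|$. The governing idea is that a cut fails to be a cocircuit precisely when one of its shores is disconnected, so I will attach gadgetry that forces both shores of every relevant cut to be connected, while a deliberate size offset keeps the genuine cuts well separated from the cocircuits that appear for spurious reasons.

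Concretely, I would build $G'$ from $G$ by adding two new, mutually non-adjacent vertices $a$ and $b$, each joined to every vertex of $V(G)$, together with a set $D$ of $d:=n+m+1$ further \emph{dummy} vertices, each joined to $a$ and to $b$ and to nothing else. Consider a cocircuit $\{X,Y\}$ of $G'$. If $a$ and $b$ lie in opposite shores, then because $a$ is adjacent to everything in $V(G)\cup D$ its shore is automatically connected, and likewise for $b$; hence the trace $(U,W)$ of the partition on $V(G)$ may be completely arbitrary, and when $U,W$ are both non-empty it is exactly a cut of $G$. Each dummy has one neighbour in each shore, so it contributes exactly one edge to the crossing set no matter where it is placed, and therefore $\delta(X,Y)$ has size $n+d+|\delta(U,W)|$. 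Conversely, if $a$ and $b$ lie in the same shore then every dummy must lie there too (a dummy on the far side would be isolated), so the opposite shore is a set $T\subseteq V(G)$ with $G[T]$ connected, and the crossing set has size $2|T|+|\delta(T,V(G)\setminus T)|\le 2n+m$.

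The key point is that the choice $d=n+m+1$ makes $n+d=2n+m+1$ strictly larger than $2n+m$, so every cocircuit in which $a$ and $b$ share a shore has size at most $2n+m<n+d$, whereas every cocircuit separating $a$ and $b$ has size at least $n+d$. Thus at the single query size $k':=n+d+k$ only the separating cocircuits contribute, and since $k\ge 1$ forces both $U$ and $W$ non-empty, each contributing configuration encodes a genuine cut of $G$ of size $k$. As each such cut gives rise to $2^{d+1}$ cocircuits (two orientations of the $V(G)$-partition relative to $a$, times $2^{d}$ placements of the dummies), the oracle returns $2^{d+1}\,\mathrm{mc}_k$, and dividing by $2^{d+1}$ recovers $\mathrm{mc}_k$. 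As $G'$ has $n+d+2$ vertices and $k'$ is polynomially bounded, the whole reduction is polynomial.

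The main obstacle, and the reason the dummies are present, is to achieve this clean separation by size: without them the ``parasitic'' cocircuits coming from connected induced subgraphs of $G$ can have exactly the same size as the cocircuits encoding genuine cuts, and no single oracle call could disentangle them. The dummies cap the parasitic sizes at $2n+m$ while uniformly shifting every genuine cut into the window above $n+d$. The rest is routine verification: that $a$ and $b$ being universal really does force both shores connected in the separating case, that each dummy being adjacent to both $a$ and $b$ makes its contribution to the crossing set exactly one, and that the multiplicity $2^{d+1}$ is constant so that the final division is exact.
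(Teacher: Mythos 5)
Your proposal is correct and is essentially the paper's own reduction: two mutually non-adjacent ``apex'' vertices joined to all of $V(G)$, plus a pad of degree-two vertices adjacent to both apexes, sized so that any cocircuit keeping the apexes on one shore is too small to reach the query size $k'$, giving the oracle answer as a fixed power of two times the number of cuts of size $k$. The only differences are cosmetic --- you use $n+m+1$ pad vertices where the paper uses $n^2$ (both suffice), and your parenthetical about a dummy on the far side should also note the degenerate cocircuit whose far shore is a single dummy, which has size $2$ and is likewise excluded by the size threshold.
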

\begin{proof}
Suppose $(G,k)$ is an instance of $\#$\textsc{Max Cut}. We construct an instance $(G',k')$ of
$\#$\textsc{Required Size Cocircuits} as follows. Suppose $G$ has
$n$ vertices. To form $G'$ add new vertices
$x,x',x_1,\ldots,x_{n^2}$ to $G$. Now add an edge from $x$ to
every other vertex of $G'$ except $x'$ and similarly add an edge
from $x'$ to every other vertex of $G'$ except $x$. Let
$k'=n^2+n+k$. Clearly $G'$ may be constructed in polynomial time. From each solution of the $\#$\textsc{Max Cut}
instance $(G,k)$ we construct $2^{n^2+1}$ solutions of the
$\#$\textsc{Required Size Cocircuits} instance $(G',k')$. Suppose
$C=(U,V(G)\setminus U)$ is a solution of $(G,k)$ then we may
freely choose to add $x,x',x_1,\ldots,x_{n^2}$ to either $U$ or
$V(G)\setminus U$, with the sole proviso that $x$ and $x'$ are not
both added to the same set, to obtain a cut in $G'$ of size
$k'=n^2+n+k$. Furthermore this cut is a cocircuit because both
shores contain exactly one of $x$ and $x'$ and so they induce
connected subgraphs.

Conversely suppose $C=(U,V(G')\setminus U)$ is a cocircuit in $G'$
of size $k'$. Consider the pair of edges incident with $x_j$. Note
that the partition $(x_j,V(G') \setminus x_j)$ is a cocircuit. So
if both of the edges incident with $x_j$ are in the crossing set
of $C$ then because of its minimality we must have $C=(x_j,V(G')
\setminus x_j)$ which is not possible because $C$ would then have
size $2<k'$. Now suppose that neither edge incident with $x_j$ is
in the crossing set of $C$. Then both $x$ and $x'$ lie in the same
block of the partition constituting $C$. But since $G$ is a simple
graph, the maximum possible size of such a cocircuit is at most
$2n+\binom n 2 < n^2+n+k$. Hence precisely one of the edges
adjacent to $x_j$ is in the crossing set. So $x$ and $x'$ are in
different shores of $C$. Hence the crossing set of $C$ contains:
for each $j$ precisely one edge incident to $x_j$ ($n^2$ edges in
total), for each $v \in V(G)$ precisely one of edges $vx$ and
$vx'$ ($n$ edges in total) and $k$ other edges with both endpoints
in $V(G)$. So the partition $C'=(U\cap V(G),V(G)\setminus U)$ is a
cut of $G$ of size $k$ and hence $C$ is one of the cocircuits
constructed in the first part of the proof. Consequently the
number of solutions of the instance $(G',k')$ of
$\#$\textsc{Required Size Cocircuits} is $2^{n^2+1}$ multiplied by
the number of solutions of the instance $(G,k)$ of $\#$\textsc{Max
Cut}.
\end{proof}

\begin{lemma}
$\#$\textsc{Required Size
Cocircuits }$\propto\#$\textsc{Cocircuits}
\end{lemma}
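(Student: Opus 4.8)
The plan is to use a polynomial interpolation argument, calling the $\#$\textsc{Cocircuits} oracle on several auxiliary graphs built from $G$ and recovering the size-resolved counts from the answers. Given an instance $(G,k)$ with $m=|E(G)|$, I would for each integer $t\geq 2$ form a simple connected graph $G_t$ by replacing every edge $uv$ of $G$ with $t$ internally disjoint paths of length two, that is, by $t$ new vertices each adjacent to both $u$ and $v$ (the original edge being deleted). Writing $a_j$ for the number of cocircuits of $G$ of size $j$, the aim is to show that the total number $N(t)$ of cocircuits of $G_t$---which is exactly what the oracle returns---is a fixed linear combination of the quantities $2^{tj}$ plus an explicitly known term, so that the $a_j$ can be extracted.

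The core of the argument is a structural description of the cocircuits of $G_t$, and this is the step I expect to be the main obstacle. I would classify each cocircuit $(A,B)$ of $G_t$ according to the trace $(A_0,B_0)=(A\cap V(G),B\cap V(G))$ on the original vertices. If both $A_0$ and $B_0$ are non-empty, then a midpoint of an edge $uv$ with $u,v$ on the same side is forced onto that side (otherwise it would be an isolated vertex of the other shore, contradicting connectivity), while a midpoint of an edge whose two endpoints are separated may lie on either side, contributing exactly one edge to the crossing set in each case. A short connectivity check should then show that $(A,B)$ is a cocircuit of $G_t$ precisely when $(A_0,B_0)$ is a cocircuit of $G$; hence a cocircuit of $G$ of size $j$ gives rise to exactly $2^{tj}$ cocircuits of $G_t$, each of size $tj$. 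The remaining cocircuits are those with one of $A_0,B_0$ empty: here every midpoint placed on the vertex-free side is isolated, so the only possibility is a single midpoint forming one shore, giving exactly $tm$ further cocircuits (here $t\geq 2$ is used to keep the other shore connected). Together these yield $N(t)=\sum_{j=1}^{m} a_j 2^{tj}+tm$.

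Finally I would run the interpolation. Putting $x=2^t$, the quantity $N(t)-tm$ equals $P(x)=\sum_{j=1}^m a_j x^j$, a polynomial of degree at most $m$ with zero constant term. Querying the oracle on $G_2,G_3,\dots,G_{m+1}$ evaluates $P$ at the $m$ distinct non-zero points $x=2^2,2^3,\dots,2^{m+1}$, and the resulting non-singular (Vandermonde) system determines $a_1,\dots,a_m$ uniquely; I would then output $a_k$. Each $G_t$ has $|V(G)|+tm$ vertices and $2tm$ edges and so is constructible in polynomial time, only $m$ oracle calls are made, and the linear algebra is over integers with polynomially many bits, so the whole reduction runs in polynomial time.
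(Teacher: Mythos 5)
Your proof is correct, and it follows the same overall strategy as the paper's: replace each edge of $G$ by a gadget depending on a parameter, classify the cocircuits of the resulting graph into those induced by cocircuits of $G$ plus an explicitly countable remainder, and recover $N_1(G),\dots,N_m(G)$ from $m$ oracle calls by inverting a Vandermonde-type system. The only genuine difference is the gadget. The paper uses the $l$-stretch (each edge becomes a path of $l$ edges), so a cocircuit of size $k$ in $G$ lifts to $l^k$ cocircuits and the leftover term is $\binom{l}{2}m$ (cuts using two edges of a single subdivided path); you use $t$ parallel paths of length two, so a cocircuit of size $k$ lifts to $2^{tk}$ cocircuits and the leftover term is $tm$, with the evaluation points being $x=2^t$ rather than $l$. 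Your structural analysis (midpoints of uncut edges are forced, midpoints of cut edges are free, the exceptional cocircuits are exactly the single-midpoint ones for $t\ge 2$) is sound, and the numbers remain of polynomial bit-length since $t\le m+1$. The paper's subdivision gadget is marginally more economical -- smaller graphs and smaller evaluation points -- but both reductions are valid polynomial-time Turing reductions.
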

\begin{proof}
Given a graph $G$ let $N_k(G)$ denote the number of cocircuits of
size $k$ and $N(G)$ denote the total number of cocircuits. Let
$G_l$ denote the $l$-stretch of $G$, that is, the graph formed
from $G$ by replacing each edge of $G$ by a path with $l$ edges.
Let $m=|E(G)|$. Then we claim that
\[ N(G_l) =\sum_{k=1}^{m} l^kN_k(G) + \binom l 2 m.\]

To see this suppose that $C$ is a cocircuit of $G_k$. If the
crossing set of $C$ contains two edges from one of the paths
corresponding to an edge of $G$ then by the minimality of the
crossing set of $C$ we see that it contains precisely these two
edges. The number of such cocircuits is $\binom l 2 m$.

Otherwise the crossing set $C$ contains at most one edge from each
path in $G_l$ corresponding to an edge of $G$. Suppose the
crossing set of $C$ contains $k$ such edges. Let $A$ denote the
corresponding edges in $G$. Then $A$ is the crossing set of a
cocircuit in $G$ of size $k$. From each such cocircuit we can
constuct $l^k$ cocircuits of $G_l$ by choosing one edge from each
path corresponding to an edge in $A$. The claim then follows.

If we compute $N(G_1),\ldots,N(G_{m})$ then we may retrieve
$N_1(G),\ldots,N_{m}(G)$ by using Gaussian elimination because
the matrix of coefficients of the linear equations is an invertible Vandermonde matrix. The fact that the
Gaussian elimination may be carried out in polynomial time follows
from~\cite{edmonds:linalg}. \end{proof}

\begin{lemma}
$\#$\textsc{Cocircuits }$\propto\#$\textsc{Convex Colourings}.
\end{lemma}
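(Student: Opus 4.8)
The plan is to notice that, up to a trivial correspondence, a connected graph has essentially the same cocircuits as convex two-colourings, so the reduction can simply pass the input graph through unchanged and recover the cocircuit count from the colouring count by an elementary formula. Let $G$ be a connected graph and write $N(G)$ for its number of cocircuits.

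First I would translate cocircuits into a connectivity condition on the shores. For a cut with shores $X$ and $Y$, deleting its crossing set $\delta(X,Y)$ removes exactly the edges joining $X$ to $Y$, so $G\setminus\delta(X,Y)$ is the disjoint union of the induced subgraphs $G:X$ and $G:Y$. By the characterisation already recorded in the excerpt, the cut is a cocircuit precisely when this graph has exactly two connected components; since $X$ and $Y$ are both non-empty, each contributes at least one component, so this happens if and only if $G:X$ and $G:Y$ are each connected. Thus the cocircuits of $G$ are exactly the unordered partitions $\{X,Y\}$ of $V(G)$ into two non-empty sets, each inducing a connected subgraph.

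Next I would compare this with the convex two-colourings, which are the functions $f\colon V(G)\rightarrow\{0,1\}$, equivalently the ordered partitions $(f^{-1}(0),f^{-1}(1))$ of $V(G)$ with both parts inducing connected subgraphs. Adopting the convention that the empty graph is connected, the two monochromatic functions are convex two-colourings because $G$ is connected. Every remaining convex two-colouring has both colour classes non-empty, and interchanging the two colours yields a second, distinct colouring representing the same unordered partition; by the previous paragraph these unordered partitions are exactly the cocircuits. Counting gives that the number of convex two-colourings of $G$ equals $2+2N(G)$.

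Finally, the reduction: on input $G$ for $\#$\textsc{Cocircuits}, I would output $G$ itself as an instance of $\#$\textsc{Convex Two-Colourings}, and from the returned value $c$ report $(c-2)/2=N(G)$. This is clearly computable in polynomial time. The only thing needing care is the bookkeeping, namely the factor of two coming from ordered colourings versus unordered cocircuits and the additive two coming from the monochromatic colourings, the latter depending on the empty-graph-connected convention; there is no genuine computational obstacle, since the graph itself is left untouched.
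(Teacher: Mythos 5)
Your proof is correct and follows exactly the paper's approach: the paper's own argument is the one-line observation that a connected graph has two monochromatic convex two-colourings and that the colourings using both colours are in two-to-one correspondence with cocircuits, giving the same formula $c = 2 + 2N(G)$ that you derive. Your version merely spells out the details (the identification of cocircuits with unordered partitions into two non-empty connected-inducing shores) that the paper leaves implicit.
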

\begin{proof}
The lemma is easily proved using the following observation. When
two colours are available, there are two convex colourings of a
connected graph using just one colour and the number of convex
colourings using both colours is equal to twice the number of
cocircuits.
\end{proof}

The preceding lemmas imply our main result.
\begin{theorem}
$\#$\textsc{Convex Colourings} is $\#$P-hard.
\end{theorem}

\section*{Acknowledgement}
We thank Graham Brightwell for useful discussions.


\end{document}